\numberwithin{equation}{section}
\theoremstyle{plain}
\newtheorem{theorem}{Theorem}[section]
\newtheorem{lemma}[theorem]{Lemma}
\newtheorem{proposition}[theorem]{Proposition}
\theoremstyle{definition}
\newtheorem{remark}[theorem]{Remark}
\newtheorem{example}[theorem]{Example}
\DeclarePairedDelimiter\ceil{\lceil}{\rceil}
 \newcommand{\argsh}{\mathrm{arcsinh}} % argument sinus hyperbolique
\title[Basmajian inequality for Riemannian surfaces]{A Basmajian-type inequality for Riemannian surfaces}
\author{Florent Balacheff}
\address{Florent Balacheff, Departament de Matem\`atiques, Universitat Aut\`onoma de Barcelona, Barcelona, Spain}
\email{florent.balacheff@uab.cat}
\author{David Fisac}
\address{David Fisac, Departament de Matem\`atiques, Universitat Aut\`onoma de Barcelona, Barcelona, Spain.}
\email{david.fisac@uab.cat}
\keywords{}
\subjclass{Primary: 53C23, 53C22. Secondary: 57K20}
\thanks{The first author acknowledges support by the FSE/AEI/MICINN grant RYC-2016-19334  ``Local and global systolic geometry and topology". The second author acknowledges support by the Luxembourg National Research Fund PRIDE17/1224660/GPS.  The first and the second authors acknowledge support by the FEDER/AEI/MICINN grant PID2021-125625NB-I00 “Estructuras y Desigualdades Geométricas Universales” and the AGAUR grant 2021-SGR-01015. }
\begin{document}
%%%%%%%%%%%%%%%%%%%%%%%%
\begin{abstract}
  We explore for compact Riemannian surfaces whose boundary consists of a single closed geodesic the relationship between  orthospectrum and boundary length. More precisely, we etablish a uniform lower bound on the boundary length in terms of the orthospectrum when fixing a metric invariant of the surface related to the classical notion of volume entropy. This inequality can be thought of as a Riemannian analog of Basmajian's identity for hyperbolic surfaces.
\end{abstract}
%%%%%%%%%%%%%%%%%%%%%%%%
\maketitle

%%%%%%%%%%%%%%%%%%%%%%%%
\section{introduction}
%%%%%%%%%%%%%%%%%%%%%%%%
Let $S$ be a compact orientable Riemannian surface with geodesic boundary $\partial S$ and negative Euler characteristic. Define its orthospectrum $\mathcal{O}(S)$ as the set of (oriented) lengths of homotopy classes relative to $\partial S$ with multiplicity, hence having always even multiplicities. Here, given such a homotopy class $\eta$, its length $\ell(\eta)$ is defined as the minimal length $\ell(c)$ over all of its representative arcs $c$. This minimal length is always realized as the length of a geodesic arc lying in the corresponding class and hitting orthogonally the boundary.

The orthospectrum has been widely studied in hyperbolic geometry, where each homotopy class admits a unique geodesic representative. There have been some celebrated results on the rigidity of the hyperbolic structures with a given orthospectrum. For example, Basmajian's identity \cite{Bas93} gives an expression of the length of the boundary in terms of the orthospectrum, Bridgeman's identity \cite{Bri11} gives an expression of the area given its orthospectrum, Parker proved in \cite{Par95} that the entropy of the geodesic flow of the surface can be expressed using Poincaré series with a given orthospectrum, and most recently, Masai and McShane \cite{MM22} proved that for a given surface and orthospectrum, there are only finitely many hyperbolic structures. In the present article, we will focus on Basmajian's identity which can be stated as follows.

\noindent\normalfont\textbf{Theorem} (\cite[Special case of Theorem 1.1]{Bas93})\textbf{.} \label{basidth}
    \textit{Suppose that the Riemannian surface $S$ is hyperbolic. Then }
    \[
    \ell(\partial S)=2\sum_{\ell\in\mathcal{O}(S)}\log\coth(\ell/2).
    \]

Our purpose is to study a generalization of Basmajian's identity to the Riemannian world. To do so, we will need to first relax the equality into an inequality, and secondly to involve an auxiliary Riemannian invariant. This auxiliary Riemannian invariant will be constructed using the classical notion of volume entropy for {\it closed} Riemannian surfaces defined as the exponential growth rate of the volume of large metric balls in their universal cover. More specifically, denoting by $S'$ the Riemannian closed surface obtained by doubling $S$, that is
\[
S'=S\sqcup S/\sim
\]
where $\sim$ identifies the boundary of the two copies of $S$ via the identity map.
Denote by $\tilde{S}$ the universal Riemannian cover of $S'$. We will be interested in the {\it volume entropy} of $S'$ defined as the quantity
\[
h(S'):=\lim_{R\to\infty}\frac{1}{R}\log\text{Area}_{\tilde{S}}B(\tilde{x},R),
\]
where $B(\tilde{x},R)$ is the ball of radius $R$ centered at some $\tilde{x}$ in $\tilde{S}$.
This limit always exists and does not depend on the chosen point $\tilde{x}$ (see \cite{Man79}). Observe that closed orientable Riemannian surfaces with negative Euler characteristic always have positive volume entropy, and that their volume entropy will always be equal to $1$ in the particular case where the metric is hyperbolic.

By, for example, attaching a flat cylinder to the boundary component, we can enlarge the orthospectrum without changing the boundary length of the surface. Hence, the orthospectrum cannot determine the length of the boundary in the Riemannian moduli space. However, if we also fix the volume entropy of the doubled surface, then the orthospectrum provides the following lower bound on the boundary length.

\begin{theorem}\label{th:surface}
Let $S$ be a compact orientable Riemannian surface with negative Euler characteristic and one geodesic boundary component.  
Then the following holds true:
\[
\ell(\partial S)\geq\frac{2}{h(S')} \, \, \argsh \left(\sum_{\ell\in\mathcal{O}(S)}\frac{1}{1+e^{h(S')\ell}}\right)
\]
where $h(S')$ denotes the volume entropy of the doubled surface $S'$. 
\end{theorem}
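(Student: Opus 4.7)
The plan is to adapt Basmajian's original shadow-decomposition proof to the Riemannian setting, with the volume entropy of the double $S'$ replacing the hyperbolic trigonometry. I would work in the universal cover $\tilde{S}$ of $S'$: fix a lift $\tilde{\gamma}_0$ of $\partial S$ and orient the perpendicular to point into the lifts of one chosen copy of $S$. For each other lift $\tilde{\gamma}$ of $\partial S$ on that side, define its \emph{shadow} $U_{\tilde{\gamma}}\subset\tilde{\gamma}_0$ to be the set of points $x$ whose inward-perpendicular geodesic meets $\tilde{\gamma}$ before any other lift. As in the hyperbolic case, a standard measure-theoretic argument shows that these shadows partition $\tilde{\gamma}_0$ up to a null set. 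Modding out by the stabilizer of $\tilde{\gamma}_0$, which is infinite cyclic of translation length $\ell(\partial S)$, the remaining shadow classes are naturally indexed by oriented orthogeodesics $\alpha\in\mathcal{O}(S)$, giving the identity
\[
\ell(\partial S) \;=\; \sum_{\alpha\in\mathcal{O}(S)} |U_\alpha|.
\]

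The heart of the argument is then to establish, for each orthogeodesic $\alpha$ of length $\ell=\ell(\alpha)$, the per-shadow lower bound
\[
|U_\alpha| \;\geq\; \frac{2}{h(S')}\,\argsh\!\left(\frac{1}{1+e^{h(S')\ell}}\right).
\]
Given this, the theorem follows formally: $\argsh$ is concave on $[0,\infty)$ with $\argsh(0)=0$ and is therefore subadditive, so $\sum_i\argsh(a_i)\geq\argsh(\sum_i a_i)$. Combining with the identity above,
\[
\ell(\partial S) = \sum_\alpha |U_\alpha| \;\geq\; \sum_\alpha \frac{2}{h(S')}\,\argsh\!\left(\frac{1}{1+e^{h(S')\ell(\alpha)}}\right) \;\geq\; \frac{2}{h(S')}\,\argsh\!\left(\sum_{\ell\in\mathcal{O}(S)}\frac{1}{1+e^{h(S')\ell}}\right),
\]
which is the claimed inequality. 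As a sanity check, in the hyperbolic case $h(S')=1$ the sharp value $|U_\alpha|=2\log\coth(\ell/2)=2\argsh(1/\sinh\ell)$ does exceed the proposed bound $2\argsh(1/(1+e^\ell))$ because $\sinh\ell\leq 1+e^\ell$.

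The main obstacle is the per-shadow estimate. Since volume entropy only controls the \emph{asymptotic} growth of ball areas in $\tilde{S}$ and gives no pointwise curvature information, the Jacobi-field comparison that produces the sharp hyperbolic shadow length is not directly available. My plan is a volume-comparison argument in $\tilde{S}$: the ``fan'' of perpendiculars connecting $U_{\tilde{\gamma}}$ to its footprint on $\tilde{\gamma}$ bounds a region whose area admits a natural lower bound in terms of $|U_\alpha|$ and $\ell$, while sitting inside a metric ball whose area is bounded above by a constant times $e^{h(S')R}$ via the very definition of volume entropy. Extracting from this two-sided comparison the precise $\argsh$ form of the per-shadow inequality—choosing $R$ correctly as a function of $\ell$ and $|U_\alpha|$ and handling the non-hyperbolic Jacobian along the fan—constitutes the main technical difficulty.
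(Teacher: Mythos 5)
Your strategy is genuinely different from the paper's, but it breaks down at exactly the point you flag as ``the main technical difficulty,'' and I do not see how to close it. The per-shadow estimate
\[
|U_\alpha| \;\geq\; \frac{2}{h(S')}\,\argsh\!\left(\frac{1}{1+e^{h(S')\ell(\alpha)}}\right)
\]
is a \emph{local} statement about one particular orthogeodesic, whereas the volume entropy $h(S')$ is a purely \emph{asymptotic, global} invariant: it only yields $\mathrm{Area}\,B(\tilde x,R)\le C_\eps e^{(h(S')+\eps)R}$ for $R$ large, with no control at any fixed scale and no pointwise Jacobian information along your fan of perpendiculars. One expects the per-shadow bound to fail outright: a localized bump of positive curvature near the foot of a single orthogeodesic can focus the inward perpendiculars and shrink that one shadow essentially to a point, while changing $\ell(\alpha)$ and the global quantity $h(S')$ arbitrarily little; the defect is absorbed by the other shadows and by the set of non-returning perpendiculars. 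So any correct argument must bound the \emph{sum} over all orthogeodesics at once rather than term by term. (A secondary point: in the curvature-free setting the set of perpendiculars that never return to the boundary need not be null, so the shadow decomposition only gives $\ell(\partial S)\ge\sum_\alpha|U_\alpha|$ rather than an identity; that is the direction you need, so it is harmless, but it should be stated rather than asserted ``as in the hyperbolic case.'')

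For comparison, the paper avoids shadows entirely. It embeds in $S$ the finite metric graphs $\Gamma_n$ consisting of the boundary circle together with the first $n$ orthogeodesic arcs, proves $2\sum_i (1+e^{h\ell_i})^{-1}<\sinh(hL/2)$ for such graphs using Lim's characterization of $h(\Gamma_n)$ as the unique positive number for which a weighted linear system on oriented edges admits a positive solution (a global Perron--Frobenius-type argument, which is precisely what substitutes for your per-term estimate), and then transfers the bound to the surface by showing $h(\Gamma_n)\le h(S')$ via an injective, length-non-increasing ping-pong map $\pi_1(\Gamma_n,x)\to\pi_1(S',x')$, letting $n\to\infty$ at the end. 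Your formal reduction of the theorem to the per-shadow bound (disjointness of shadows plus subadditivity of the concave function $\argsh$ vanishing at $0$) is sound bookkeeping, and your hyperbolic sanity check is correct; what is missing is exactly the ingredient that the paper's graph-theoretic detour is designed to supply.
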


This result can be viewed as a curvature-free analog of the celebrated Basmajian's identity for hyperbolic surfaces with one geodesic boundary component. It confirms that analogs of hyperbolic identities and inequalities can be found in the Riemannian free-curvature setting by involving Riemannian invariants associated to the volume entropy, like in \cite{BM23} where a curvature-free version of the classic $\log(2k-1)$ Theorem was proved using the notion of critical exponent. So far, the question to know if a Riemannian analog of Basmajian's identity holds  for several geodesic boundary components remains open.

Here is the plan of the paper.
In order to obtain Theorem \ref{th:surface}, we will prove in the first section the following result for metric graphs.

\begin{theorem}\label{th:graph}
Fix $n\geq 1$. Let $\Gamma$ be a metric graph formed by a circle of length $L$ with $2n$ disjoint vertices on it, and $n$ edges of lengths $\ell_1,\hdots, \ell_n$ joining these vertices by pairs. 
Then the following holds true:
\[
\tanh\left(\frac{h(\Gamma) L}{2}\right)< 2 \, \, \sum_{i=1}^n \frac{1}{1+e^{h(\Gamma)\ell_i}}<\sinh\left(\frac{h(\Gamma) L}{2}\right)
\]
where $h(\Gamma)$ denotes the volume entropy of the metric graph $\Gamma$. 
\end{theorem}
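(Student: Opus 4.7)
The plan is to use the standard characterization of the volume entropy $h := h(\Gamma)$ of a finite metric graph with vertices of degree at least $3$: $h$ is the unique positive real for which there exist positive weights $(c_e)$ on the oriented edges satisfying
\[
c_e\, e^{h\ell(e)} \;=\; \sum_{f:\, e\to f} c_f,
\]
where $f$ ranges over oriented edges emanating from the terminal vertex of $e$ and different from the reverse $\bar e$. My strategy is to reduce this Perron-type system to a closed identity involving only the chord weights, and then derive the two-sided bound by elementary estimates.

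Label the vertices $v_1,\dots,v_{2n}$ cyclically around the circle, write $a_i$ for the length of the arc from $v_i$ to $v_{i+1}$ (so $\sum a_i = L$), and let $\sigma$ be the fixed-point-free involution on $\{1,\dots,2n\}$ encoding the chord pairing. For each $i$, let $p_i^{\pm}$ and $q_i$ be the weights of the three oriented edges leaving $v_i$ (the two arc orientations and the chord). The equations for the $p_i^{+}$ amount to a first-order recurrence $p_i^{+} e^{ha_i}=p_{i+1}^{+}+q_{i+1}$ around the circle. Iterating $2n$ times and closing via $p_{i+2n}^{+}=p_i^{+}$, I solve for
\[
p_i^{+} \;=\; \frac{1}{e^{hL}-1}\sum_{j=1}^{2n} q_j\, e^{h\, d^{+}(v_j,v_i)},
\]
where $d^{+}(v_j,v_i)$ denotes the forward arc distance from $v_j$ to $v_i$; an analogous formula holds for $p_i^{-}$ with backward arc distances. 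Substituting into the chord equation $q_i e^{h\ell_{k(i)}} = p_{\sigma(i)}^{+}+p_{\sigma(i)}^{-}$, pairing forward and backward distances via $d^{+}+d^{-}=L$, and using the elementary identities $2e^{hL/2}/(e^{hL}-1)=1/\sinh(hL/2)$ and $\cosh(hL/2)-e^{-hL/2}=\sinh(hL/2)$, I arrive at the central chord identity
\[
\bigl(q_i e^{h\ell_k}+q_{\sigma(i)}\bigr)\sinh(hL/2) \;=\; \sum_{j=1}^{2n} q_j\,\cosh\!\bigl(h\bigl(d^{+}(v_j,v_{\sigma(i)})-L/2\bigr)\bigr),
\]
where $\ell_k$ denotes the length of the chord through $v_i$.

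With this identity in hand, both inequalities follow from the pointwise estimates $1\le \cosh(h(d^{+}-L/2))\le \cosh(hL/2)$, which are strict whenever $d^{+}\ne L/2$ and $d^{+}\in(0,L)$ respectively. Summing the chord identity at the two endpoints of each chord and then over all $n$ chords, and using that every vertex is an endpoint of exactly one chord so that the resulting double sum of $q_i+q_{\sigma(i)}$ equals $Q:=\sum_j q_j$, yields after division by $Q$
\[
\sum_{k=1}^{n}\frac{2}{e^{h\ell_k}\sinh(hL/2)+1-e^{-hL/2}} \;\le\; 1 \;<\; 2\coth(hL/2)\sum_{k=1}^{n}\frac{1}{1+e^{h\ell_k}}.
\]
The right inequality is precisely $\tanh(hL/2)<2\sum_k 1/(1+e^{h\ell_k})$. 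For the left, the elementary inequality $\sinh(hL/2)>1-e^{-hL/2}$ (equivalent to $\cosh(hL/2)>1$) gives the termwise strict bound $1/(1+e^{h\ell_k})<\sinh(hL/2)/(e^{h\ell_k}\sinh(hL/2)+1-e^{-hL/2})$, which when summed produces $2\sum_k 1/(1+e^{h\ell_k})<\sinh(hL/2)$.

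The hard step will be establishing the chord identity cleanly: correctly closing the circular recurrence and massaging the resulting boundary term via $\cosh(hL/2)-e^{-hL/2}=\sinh(hL/2)$ to reach the symmetric form above. Once this identity is in place, the pair-summation at chord endpoints and the cyclic-sum trick over all $n$ chords follow from the structure of the problem.
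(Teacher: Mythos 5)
Your proposal is correct and follows essentially the same route as the paper: both pass through Lim's Perron-type characterization of the volume entropy, solve the circular recurrence to express the arc weights in terms of the chord weights, and arrive at the same chord identity with $\cosh(h(d-L/2))/\sinh(hL/2)$ coefficients, which is then attacked with the pointwise bounds $1\leq\cosh\leq\cosh(hL/2)$. The only divergence is the concluding step — the paper evaluates the resulting per-chord inequalities at the maximal (resp.\ minimal) renormalized variable $Z_k$, whereas you sum them over all $n$ chords and cancel $Q=\sum_j q_j$; your version is equally valid (and your left-hand refinement with $e^{h\ell_k}\sinh(hL/2)+1-e^{-hL/2}$ is even slightly sharper), provided you apply the $\cosh$ bounds to each chord's identity \emph{before} summing over chords, since bounding after the global summation would introduce a spurious factor of $2n$.
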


In the second section we will show how to transfer this result from metric graphs to Riemannian surfaces and prove Theorem \ref{th:surface}. The idea is to embed a suitable sequence of metric graphs in our initial surface $S$ whose volume entropies will be controlled by the volume entropy of the doubled surface using a ping-pong map. 

\section{A generalization of Basmajian's identity for metric graphs}

To prove Theorem \ref{th:graph}, we start by giving some notation for the graphs we are interested in. These graphs topologically consist of a circle, playing the role of the boundary, with some additional edges playing the role of the orthogeodesics and joining disjoint pairs of disjoint points on the circle. We will consider the various possible metrics on such a graph, and prove a Basmajian-type inequality for them. Next we analyze the optimality of our result.

 \subsection{Notation and proof of Theorem \ref{th:graph}}
By a metric graph, we mean a $1$-dimensio\-nal simplicial complex $\Gamma$ endowed with a piecewise Riemannian metric denoted by $\ell$. We will simply denote by $\Gamma$ the metric graph $(\Gamma,\ell)$ when the metric $\ell$ is clear out from the context. For such a choice of metric, each $1$-simplex (or edge) $e$ turns out to be isometric to the segment $[0,\ell(e)]$ with the standard Euclidean metric for some positive real number $\ell(e)$ called its length. The length of a graph $\Gamma$ is then the sum of the lengths of its edges, which could possibly be infinite. As any subset $X\subset \Gamma$ which is itself a $1$-complex inherits from $\ell$ an induced metric, its induced length is then well defined and will be denoted by $\ell(X)$. With this notation in mind, the {\it volume entropy} of a metric graph $\Gamma$ is then the quantity
\[
h(\Gamma):=\lim_{R\to\infty}\frac{\log \ell(B(\tilde{x},R))}{R}
\]
where $B(\tilde{x},R)$ denotes the ball of radius $R$ centered at some point $\tilde{x}$ in the universal covering tree of $\Gamma$ endowed with the lifted metric. This limit always exists and does not depend on the chosen point $\tilde{x}$, see \cite{Lim08}.

 \begin{figure}[h]
    \centering
    \begin{overpic}[width=.4\linewidth,keepaspectratio]{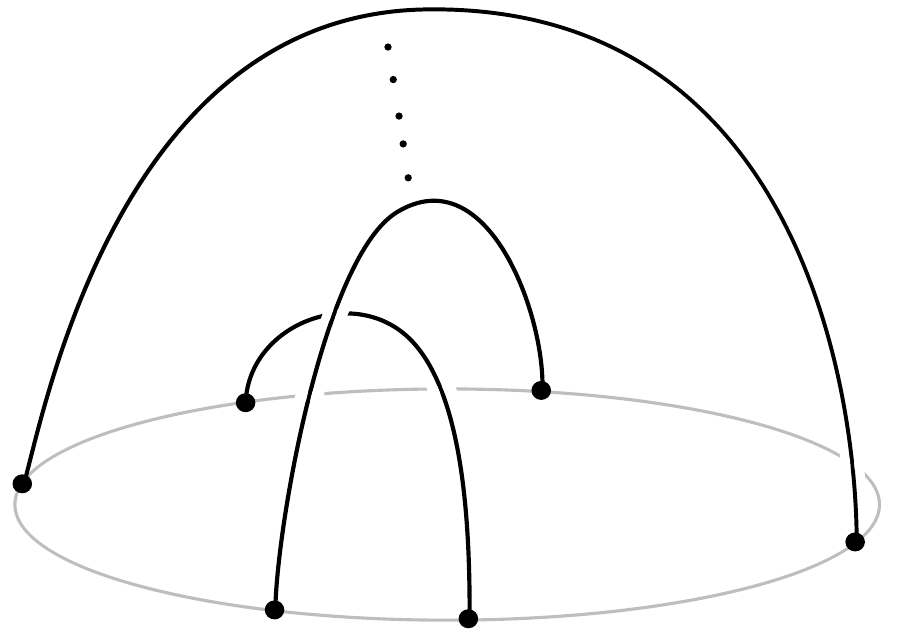}
    \put(2,2){$\mathbb{S}^1$}
    \put(26,36){$f_1$}
    \put(55,46.5){$f_2$}
    \put(20,65){$f_n$}
\end{overpic}
    \label{fig:fig1}
\end{figure}

Our special metric graphs are constructed as follows.
   Let $n\geq 1$ be an integer. Consider the circle ${\mathbb S}^1$ with some orientation and fix $2n$ cyclically ordered vertices $v_1,\ldots,v_{2n}$ on ${\mathbb S}^1$. We denote by $e_i$ the edge defined as the portion of the circle between $v_{i-1}$ and $v_{i}$ for $i=1,\ldots,2n$ using a cyclic index. Then fix a decomposition of $\{1,\ldots,2n\}=\{i_1,j_1\}\cup \ldots \cup \{i_n,j_n\}$ into $n$ pairs of indices.  For each $k=1,\ldots,n$ join $v_{i_k}$ and $v_{j_k}$ through an extra edge denoted by $f_k$ and denote by $\tau_k \in {\mathfrak S}_{2n}$ the transposition permuting $i_k$ and $j_k$. For latter purposes set $\omega:=\tau_1\ldots\tau_n$. The topological structure of our graph is defined as $\Gamma:={\mathbb S}^1\cup f_1\cup\ldots \cup f_n$ which is a finite simplicial $1$-complex.
Now fix $(L_1,\ldots,L_{2n}) \in ({\mathbb R}_{>0})^{2n}$  and $(\ell_1,\ldots,\ell_n) \in ({\mathbb R}_{>0})^n$, and choose a metric $\ell$ on our graph $\Gamma$ such that $\ell(e_i)=L_i$ for $i=1,\ldots,2n$, and $\ell(f_k)=\ell_k$ for $k=1,\ldots,n$. Finally set $L:=\sum_{i=1}^{2n}L_i$.

We will now show that these special metric graphs $(\Gamma,\ell)$ satisfy the Basmajian's type double inequality already stated in Theorem \ref{th:graph} that we recall here for reader's convenience:
\[
\tanh\left(\frac{hL}{2}\right)< 2\, \, \sum_{i=1}^n \frac{1}{1+e^{h\ell_i}}<\sinh\left(\frac{hL}{2}\right).
\]
The idea of the proof is quite simple. By \cite[Theorem 4]{Lim08} we can associate to our graph a system of linear equations with $6n$ variables, whose coefficients depend only on the volume entropy of $\Gamma$ and the lengths of the edges, and such that the system admits a positive solution. We will prove that the existence of such a positive solution implies that the double inequality holds true.

\begin{proof}[Proof of Theorem \ref{th:graph}]
Start by identifying the finite simplicial $1$-complex $\Gamma$ with an unoriented graph, see \cite{Lim08}.
Then we associate to each oriented edge of $\Gamma$ a variable as follows. Using a cyclic index, we denote by
\begin{itemize}
   \item  $x_k$ the variable  associated to the oriented edge corresponding to $e_{k}$ and going from $v_{k-1}$ to $v_{k}$ for $k=1,\ldots,2n$ (therefore $x_1$ is associated to the oriented edge $e_1$ from $v_{2n}$ to $v_1$),
   \item $\overline{x}_1,\ldots,\overline{x}_{2n}$ the $2n$ variables associated to the same edges but with opposite orientation,
    \item $y_k$ the variable associated to the extra oriented edge going from $v_{k}$ to $v_{\omega(k)}$  for $k=1,\ldots,2n$ (that is, $f_{k'}$ for some $k'\in\{1,\hdots,n\}$).
    \end{itemize}
Finally, define $\ell'_k$ as the length of the oriented edge associated to the variable $y_k$. Here the length of an oriented edge is defined as the length of the $1$-simplex to which it is naturally associated.    
We have no need to introduce the variables $\overline{y}_k$ because they would satisfy that $\overline{y}_k=y_{\omega(k)}$. In a similar way, observe that $\ell'_{\omega(k)}=\ell'_k$ for $k=1,\ldots,2n$ and thus  \[\sum_{k=1}^{2n}\ell'_k=2\cdot \sum_{k=1}^{n} \ell_k.\]

We now form the following system of linear equations: 
\[
\{x_f=\sum_{f'\in E(f)}e^{-h\ell(f')}x_{f'} \mid f \in E(\Gamma)\}
\]
where $h$ denotes the volume entropy $h(\Gamma)$ of our graph, $x_f$ is the variable associated to an oriented edge $f$, $E(\Gamma)$ denotes the set of oriented edges of $\Gamma$ and $E(f)$ is the set of oriented edges different from $\overline{f}$ whose startpoint is $f$'s endpoint. By \cite[Theorem 4]{Lim08} we know that $h$ is the only positive number such that this system admits a positive solution. 
Therefore there exist positive real numbers $(X_1,\ldots,X_{2n},\overline{X}_1,\ldots,\overline{X}_{2n},Y_1,\ldots,Y_{2n})\in \mathbb{R}^{6n}_{>0}$  satisfying the following system of equations:
\[
\left\{\begin{aligned}
Y_k&=e^{-hL_{\omega(k)+1}} X_{\omega(k)+1}+e^{-hL_{\omega(k)}} \overline{X}_{\omega(k)}\\
X_k&=e^{-hL_{k+1}} X_{k+1}+e^{-h\ell'_k}Y_k\\
\overline{X}_k&=e^{-hL_{k-1}} \overline{X}_{k-1}+e^{-h\ell'_{k-1}}Y_{k-1}
\end{aligned}
\right.
\]
where $ k=1\ldots,2n$.

By substituting cyclically the equations of the second line of our system as follows:
\[X_k=e^{-h\ell'_k} Y_k+e^{-hL_{k+1}}(e^{-h\ell'_{k+1}}Y_{k+1}+e^{-hL_{k+2}}(e^{-h\ell'_{k+2}}Y_{k+2}+\ldots)),
\]
one obtains that
\[
X_k=\sum_{i=1}^{2n} \alpha_{i,k} \, \, \frac{e^{-h\ell'_i}}{1-e^{-hL}} \, \,  Y_i
\]
where $\alpha_{i,k}:=e^{-h(\sum_{j=k}^{i}L_j-L_k)}$ is cyclically summed, as $L=\sum_{i=1}^{2n}L_i$. 
Analogously, starting by substituting cyclically the equations of the third line in our system we obtain that for $k=1,\hdots,2n$
\[
\overline{X}_k=\sum_{i=1}^{2n} \beta_{i,k} \, \,\frac{e^{-h\ell'_{i}}}{1-e^{-hL}} \, \, Y_{i}
\]
where $\beta_{i,k}=e^{-h(\sum_{j=i+1}^{k}L_j-L_{k})}$ is cyclically summed. 
%For the rest of the proof, all sums on $i,k\in\{1,\hdots,2n\}$ will be cyclic.
Combining both equalities above, one obtains using the equations of the first line in our system that
\[
Y_{\omega(k)}=e^{-hL_{k+1}}X_{k+1}+e^{-hL_k}\overline{X}_k=\sum_{i=1}^{2n}e^{-h\ell'_i} \, \,\frac{e^{-hL_{k+1}}\alpha_{i,k+1}+e^{-hL_{k}}\beta_{i,k}}{1-e^{-hL}} \, \, Y_{i} 
\]
for $k=1,\hdots,2n$.

Note that, on one hand, we have for $i\neq k$
\begin{align*}
    \frac{e^{-hL_{k+1}}\alpha_{i,k+1}+e^{-hL_{k}}\beta_{i,k}}{1-e^{-hL}} &= \frac{e^{-h\sum_{j=k+1}^{i}L_j}+e^{-h\sum_{j=i+1}^{k}L_j}}{1-e^{-hL}}\\
    &= \frac{e^{-h\sum_{j=k+1}^{i}L_j}+e^{-hL+h\sum_{j=k+1}^{i}L_j}}{1-e^{-hL}}\\
    &= \frac{e^{hL/2-h\sum_{j=k+1}^{i}L_j}+e^{-hL/2+h\sum_{j=k+1}^{i}L_j}}{e^{hL/2}-e^{-hL/2}}\\
    &=\frac{\cosh(h(\sum_{j=k+1}^{i}L_j-L/2))}{\sinh(hL/2)},
\end{align*}
and, on the other hand,  \[\frac{e^{-hL_{k+1}}\alpha_{k,k+1}+e^{-hL_{k}}\beta_{k,k}}{1-e^{-hL}}=\frac{2e^{-hL}}{1-e^{-hL}}=\frac{e^{-hL/2}}{\sinh(hL/2)}.\]
Hence
\[Y_{\omega(k)}+e^{-h\ell'_k}Y_k=\sum_{i=1}^{2n}e^{-h\ell'_i} \, \,\frac{\cosh(h(\sum_{j=k+1}^{i}L_j-L/2))}{\sinh(hL/2)}Y_{i},\]
and finally  
\begin{align}\label{eq:nonsharp}
\begin{split}
&(1+e^{-h\ell'_k})(Y_k+Y_{\omega(k)})=\\&=\sum_{i=1}^{2n}e^{-h\ell'_i}\cdot\frac{\cosh(h(\sum_{j=k+1}^{i}L_j-L/2))+\cosh(h(\sum_{j=\omega(k)+1}^{i}L_j-L/2))}{\sinh(hL/2)}Y_{i}\end{split}\end{align}
for $k=1,\hdots,2n$ using that $\ell'_{\omega(k)}=\ell'_k$.

Applying now that for all $k,i\in\{1,\hdots,2n\}$ the inequality 
\[
\cosh(h(\sum_{j=k+1}^{i}L_j-L/2))\leq\cosh(hL/2),
\]
one obtains for all $k=1,\hdots,2n,$
\[(1+e^{-h\ell'_k})(Y_k+Y_{\omega(k)})\leq\sum_{i=1}^{2n} \frac{2e^{-h\ell'_i}}{\tanh(hL/2)}Y_{i}=\frac{1}{\tanh(hL/2)}\sum_{i=1}^{2n}e^{-h\ell'_i}(Y_i+Y_{\omega(i)}).\]
With a change of variables to $Z_1,\hdots,Z_n\in\mathbb{R}_{>0}$, where $Z_{k'}=(1+e^{-h\ell'_k})(Y_k+Y_{\omega(k)})$, for $k'$ being such that $\ell_{k'}=\ell'_k$, one finds for $k=1,\hdots,n$
\[Z_k\leq \frac{2}{\tanh(hL/2)}\sum_{i=1}^n\frac{1}{1+e^{h\ell_i}}Z_i.\]
Finally, taking the equation for $Z_{\text{max}}=\max{Z_i}$, and since $Z_k>0$ for all $k$, we obtain
\[
\tanh\left(\frac{hL}{2}\right)\leq 2\sum_{i=1}^n\frac{1}{1+e^{h\ell_i}}.
\]

Analogously, applying that for all $k,i\in\{1,\hdots,2n\}$, $\cosh(h(\sum_{j=k+1}^{i}L_j-L/2))\geq1$, and taking the minimum for $Z_j$ at the end, we obtain 
\[
\sinh\left(\frac{hL}{2}\right)\geq 2\sum_{i=1}^n\frac{1}{1+e^{h\ell_i}}.
\]
Strictness of both inequalities follows from the fact that it is impossible for any sequence $\{L_1,\ldots,L_{2n}\}$ of positive numbers to achieve
 $\sum_{j=k+1}^{i}L_j=L/2$ for all $k,i$, or $\sum_{j=k+1}^{i}L_j-L/2=\pm L/2$ for all $k,i$.
\end{proof}

\subsection{On the optimality of the Basmajian-type inequality for metric graphs}
Observe that if one allows the $L_i$'s to be zero, Theorem \ref{th:graph} is still true with the inequalities being non-strict by continuity of the volume entropy in terms of the involved lengths.
Moreover, the condition 
\[
\sum_{j=k+1}^{i}L_j-L/2=\pm L/2
\]
 $\forall k,i=1,\ldots,2n$ is then achievable, and corresponds to the extremal case where all of the $L_i$'s are zero except one. Even if this kind of graph will not appear as the orthogeodesic graph that will be constructed in the next section from a smooth Riemannian surface with boundary, it proves that the first inequality in Theorem \ref{th:graph} is optimal. More precisely, we have the following.

\begin{proposition}\label{buquet}
Fix $L\in {\mathbb R}_{>0}$  and $(\ell_1,\ldots,\ell_n) \in ({\mathbb R}_{>0})^n$. Any metric graph of the type $(\bigvee_{i=0}^n {\mathbb S}^1_i,\ell)$ where $\ell({\mathbb S}^1_0)=L$ and $\ell({\mathbb S}^1_i)=\ell_i$ for all $i=1\ldots,n$ satisfies 
\[
\tanh\left(\frac{hL}{2}\right)= 2\sum_{i=1}^n\frac{1}{1+e^{h\ell_i}}
\]
where $h$ denotes its volume entropy.
\end{proposition}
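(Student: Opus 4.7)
The plan is to apply Lim's characterization of the volume entropy (\cite[Theorem 4]{lim08}) directly to the wedge, viewed as a genuine simplicial $1$-complex by subdividing each loop at its midpoint. Writing $L_0 := L$ and $L_i := \ell_i$ for $i = 1, \ldots, n$, the subdivided graph has a central vertex $p$, midpoints $m_0, \ldots, m_n$, and two edges of length $L_i/2$ joining $p$ to $m_i$ for each $i$. For each $i$ I would denote by $x_i^+, x_i^-$ Lim's variables for the two oriented edges $p \to m_i$ and by $\overline{x}_i^+, \overline{x}_i^-$ those for their reversals, and pool them into $A_i := x_i^+ + x_i^-$ and $B_i := \overline{x}_i^+ + \overline{x}_i^-$, both strictly positive by \cite[Theorem 4]{lim08}.

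The next step is to derive a closed system in the $A_i$'s alone. Summing Lim's equations at each midpoint $m_i$ yields the clean relation $A_i = e^{-hL_i/2} B_i$, while summing them in pairs at the wedge vertex $p$ (keeping track of the fact that at $p$ the only oriented edge excluded from the equation for $\overline{x}_i^\pm$ is its own reversal) gives
\[
B_i = 2\sum_{j=0}^n e^{-hL_j/2} A_j - e^{-hL_i/2} A_i.
\]
Eliminating $B_i$ produces the identity $\cosh(hL_i/2)\,A_i = \sum_{j=0}^n e^{-hL_j/2} A_j$ for every $i = 0, \ldots, n$. Since the right-hand side is independent of $i$, the product $A_i \cosh(hL_i/2)$ must equal a common positive constant $C$. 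Substituting $A_i = C/\cosh(hL_i/2)$ back into the definition of $C$ and simplifying $e^{-hL_j/2}/\cosh(hL_j/2) = 2/(1+e^{hL_j})$ yields the characteristic equation
\[
1 = \sum_{j=0}^n \frac{2}{1+e^{hL_j}}.
\]

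Isolating the $j = 0$ term and using $\tanh(x/2) = (e^x - 1)/(e^x + 1)$ with $x = hL$ then delivers the desired equality. One can alternatively view this computation as running the proof of Theorem \ref{th:graph} in the degenerate configuration $L_1 = L$, $L_2 = \cdots = L_{2n} = 0$: the cyclic sums $\sum_{j=k+1}^i L_j$ take only the values $0$ and $L$, so the inequality $\cosh(h(\sum_{j=k+1}^i L_j - L/2)) \leq \cosh(hL/2)$ used there becomes an equality in every instance, forcing all the $Z_k$ to coincide and producing exactly the same identity. I expect the main obstacle to be the careful bookkeeping of the $2(n+1)$ oriented edges meeting at the wedge vertex; once that is under control the algebra reduces to the straightforward manipulation above.
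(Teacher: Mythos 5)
Your argument is correct, but it takes a different route from the paper, whose entire proof is a one-line citation: the identity $\frac{1}{1+e^{hL}}+\sum_{i=1}^n\frac{1}{1+e^{h\ell_i}}=\frac12$ for the bouquet is quoted from \cite[Lemma 5]{balmer21}, and the $\tanh$ formula follows by the same rearrangement you perform at the end. What you do instead is rederive that lemma from scratch by writing out Lim's system for the wedge; I checked the bookkeeping and it is sound: $A_i=e^{-hL_i/2}B_i$ at the midpoints, $B_i=2\sum_j e^{-hL_j/2}A_j-e^{-hL_i/2}A_i$ at the wedge point, hence $\cosh(hL_i/2)A_i=C$ for a common constant and $1=\sum_{j=0}^n\frac{2}{1+e^{hL_j}}$. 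The midpoint subdivision is a harmless device (it changes neither the metric space nor the entropy, and eliminating the degree-two midpoint variables returns the system for the unsubdivided bouquet with loops, which yields the same identity), though if you want to invoke \cite[Theorem 4]{lim08} verbatim you should say a word about why valence-two vertices, or alternatively loop edges, are admissible. Your approach buys self-containedness and is very much in the spirit of the paper's own Example \ref{exemple}, which solves Lim's system directly for a neighbouring degenerate graph; the paper's proof buys brevity by outsourcing the computation. Your closing observation --- that the bouquet is the degeneration $L_1=L$, $L_2=\cdots=L_{2n}=0$ of Theorem \ref{th:graph}, in which every $\cosh(h(\sum_{j=k+1}^{i}L_j-L/2))$ equals $\cosh(hL/2)$ and the upper-bound chain becomes an equality --- is exactly the heuristic the paper states in the paragraph preceding the proposition; as a standalone proof it would additionally require the continuity of the entropy in the edge lengths that the paper invokes there, but your primary computation does not depend on it.
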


\begin{proof}
It easily follows from the fact that
\[
\frac{1}{1+e^{hL}}+\sum_{i=1}^n\frac{1}{1+e^{h\ell_i}}=\frac{1}{2}
\]
which holds true by \cite[Lemma 5]{BM21}. 
\end{proof}
A second graph will exemplify that this phenomenon is not rigid.

\begin{example}\label{exemple}
     Take a graph $\Gamma$ with a circle of length $L$ with two vertices at distance $L/2$, and $n$ edges joining the two vertices of length $\ell_1,\hdots, \ell_n$. Again, this is a limit case of the graphs at the statement of Theorem \ref{th:graph} with $L_1=L_{n+1}=L/2$ and $L_j=0$ otherwise.
\begin{figure}[h]
    \centering
    \begin{overpic}[width=.4\linewidth,keepaspectratio]{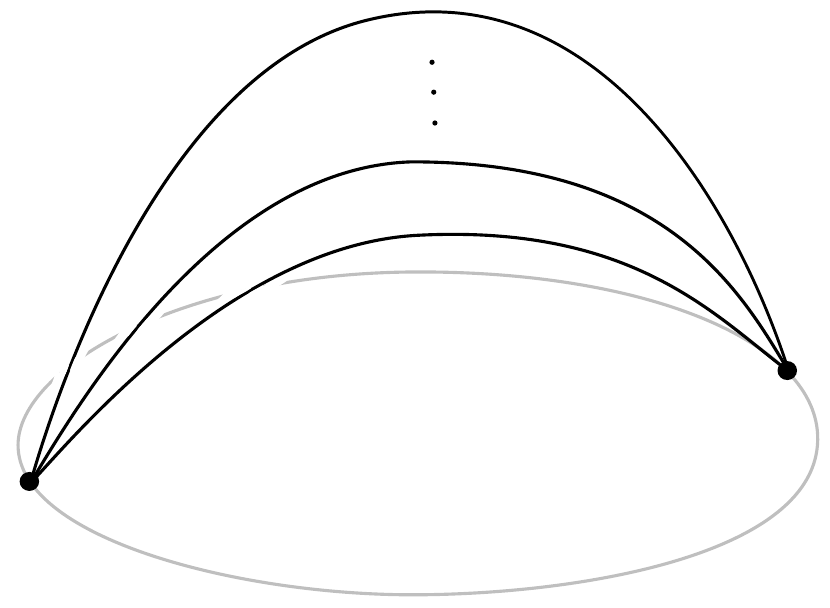}
    \put(2,2){$\mathbb{S}^1$}
    \put(18,25){$f_1$}
    \put(60,55){$f_2$}
    \put(20,62){$f_n$}
\end{overpic}
    \label{fig:fig2}
\end{figure}

In this case, the example is simple enough so one can solve it by using Lim's system directly. 
Associating to every unoriented edge a variable $x_f$ for the left to right orientation and a variable $\overline{x_f}$ for right to left, one gets that Lim's system can be written, after summing equations for opposite orientations, as
\[
(x_f+\overline{x_f})(1+e^{h\ell(f)})=\sum_{f'\in E(\Gamma)} e^{-h\ell(f')}(x_{f'}+\overline{x_{f'}}),
\]
where $E(\Gamma)$ is the set of unoriented edges in $\Gamma$. This implies $1=\sum_{f\in E(\Gamma)} \frac{1}{1+e^{h\ell(f)}}$, that translates to 
\[
\tanh(hL/4)=\sum_{i=1}^n\frac{1}{1+e^{h\ell_i}}.
\]
\end{example}

\begin{remark}[Non-sharpness of the upper bound]\label{remark:notsharp} We conjecture that the upper bound in Theorem \ref{th:graph} is not sharp. 
Indeed, given Equation (\ref{eq:nonsharp}) in the proof, a straightforward sufficient condition to attain sharpness would be, for a graph formed by a circle of length $L>0$ partitioned by $N>0$ vertices into segments of lengths $L_1,\hdots, L_N>0$, to satisfy 
    \begin{equation}\label{eq:nonsharp2}
    \cosh\Big(h(\Gamma)\cdot\Big(\sum_{j=k+1}^{i}L_j-L/2\Big)\Big)=1 \text{ for all }i,k\in\{1,\hdots,N\}.
    \end{equation}
We can prove that all graphs satisfying the hypothesis of Theorem \ref{th:graph} are uniformly away from satisfying Equation (\ref{eq:nonsharp2}) in the sense that 
\[
\#\{i\in\{1,\hdots,N\}\mid C(i)\geq N/4\}\geq N/2,
\]
where
\[
C(i)=\#\{k\in\{1,\hdots, N\}\mid \cosh\Big(h\cdot\Big(\sum_{j=k+1}^i L_j-L/2\Big)\Big)>\cosh(hL/4)\}
\] 
for $i=1,\hdots, N$ and the sum being cyclic.
   To prove it, rewrite 
   \[
   C(i)=\#\{k\in\{1,\hdots, N\}\mid \sum_{j=k+1}^i L_j<L/4\text{ or }\sum_{j=k+1}^i L_j>3L/4\}.
   \]
   For any $\lambda\in[0,1]$,     
    \[
    \#\{i\in\{1,\hdots,N\}\mid C(i)\geq N/4\}\geq \lambda N,
    \] 
    if and only if, $\#\{ v\in V(\Gamma)\mid  |B(v,L/4)| \geq N/4\}\geq \lambda N $. Assume now the opposite, i.e. $\#\{ v\in V(\Gamma)\mid  |B(v,L/4)| \geq N/4\}< \lambda N $. This implies the existence of $\ceil*{(1-\lambda)N}$ vertices such that $|B(v,L/4)|<N/4$. However, each of these balls is a half circle centered at a vertex $v\in V(\Gamma)$, hence at most there are $N/2$ vertices with this property. Therefore, $\lambda>1/2$ which proves the claim.

    We do not expect possible refinements of the above argument to improve the $\sinh$ order of our upper bound.
On the other hand, every example computed by the authors has a $\tanh$-like growth which makes us think that the behavior is more rigid than the proved statement.
\end{remark}

%%%%%%%%%%%%%%%%%%%%%%%%%%%%%%%%%%%%%%%%%%%%%%%%%%%%%%%%%%%%
\section{From metric graphs to Riemannian surfaces with boundary}\label{surf:grap}
%%%%%%%%%%%%%%%%%%%%%%%%%%%%%%%%%%%%%%%%%%%%%%%%%%%%%%%%%%%%
Let $S$ be a compact orientable Riemannian surface with negative Euler
characteristic and one geodesic boundary component of length $L$ that we will denote by  $\gamma$. 
%%%%%%%%%%%%%%%%%%%%%%%%%%%%%%%%%%%%%%%%%%%%%%%%%%%%%%%%%%%%
\subsection{A sequence of special metric graphs} \label{subsec:constrgamma}
%%%%%%%%%%%%%%%%%%%%%%%%%%%%%%%%%%%%%%%%%%%%%%%%%%%%%%%%%%%%
Fix any order $\{\eta_k\}_{k \geq 1}$ on the set of homotopy classes of $S$ relative to $\partial S=\gamma$. For any $k\geq 1$, denote by $\ell_k$ the length of the homotopy class $\eta_k$, and fix any length-minimizing arc $c_k$ in this class. In particular each arc $c_k$ will be geodesic, have length  $\ell(c_k)=\ell_k$, and meet the boundary curve $\gamma$ orthogonally at two points that we will denote by $w_{k,1}$ and $w_{k,2}$.
Furthermore observe that $\{\ell_k \mid k\geq 1\}={\mathcal O}(S)$, and that all points $\{w_{k,i}\}$ are pairwise disjoint.

We now construct a sequence of special metric graphs $\{\Gamma_n\}_{n\geq 1}$ associated to our Riemannian surface with one boundary geodesic as follows.
Choose an orientation of $\gamma$.
For each $n\geq 1$, rewrite the set $\{w_{i,j}\mid i=1,\ldots,n \, \text{, }\, j=1,2\}$ of intersecting points as a cyclically ordered set along $\gamma$ of $2n$ pairwise distinct vertices $\{v_1,\ldots,v_{2n}\}$. We have a natural decomposition $\{1,\ldots,2n\}=\{i_1,j_1\}\cup \ldots \cup \{i_n,j_n\}$ such that for any $k=1,\ldots,n$ the arc $c_k$ joins $v_{i_k}$ and $v_{j_k}$. Define $L_i$ as the length of the subarc of $\gamma$ going from $v_{i-1}$ to $v_i$ for $i=1,\ldots,2n$ using a cyclic index. Given a diffeomorphism $\gamma\simeq {\mathbb S}^1$, we can consider the ordered set of $2n$ vertices $\{v_1,\ldots,v_{2n}\}$ constructed above as laying on ${\mathbb S}^1$.  We denote by $e_i$ the edge defined as the portion of ${\mathbb S}^1$ between $v_{i-1}$ and $v_{i}$ for $i=1,\ldots,2n$ using a cyclic index. Next, for each $k=1,\ldots,n$ join $v_{i_k}$ and $v_{j_k}$ through an extra edge denoted by $f_k$. The topological structure of our graph $\Gamma_n$ is then defined as the finite $1$-dimensional simplicial complex
\[
\Gamma_n={\mathbb S}^1\cup f_1\cup\ldots \cup f_n.
\]
Now choose any metric $\ell$ on our graph $\Gamma_n$ such that $\ell(e_i)=L_i$ for $i=1,\ldots,2n$, and $\ell(f_k)=\ell_k$ for $k=1,\ldots,n$. Denote simply by $\Gamma_n$ the metric graph $(\Gamma_n,\ell)$ thus defined.
Observe that each metric graph $\Gamma_n$ could be viewed as a subgraph of the metric graph $\Gamma_{n+1}$. According to Theorem \ref{th:graph} we have that
\begin{equation}\label{eq:Basm}
\ell(\partial S)>\frac{2}{h(\Gamma_n)}\argsh \left(2 \, \sum_{k=1}^n\frac{1}{1+e^{h(\Gamma_n)\ell_k}}\right).
\end{equation}

%%%%%%%%%%%%%%%%%%%%%%%%%%%%%%%%%%%%%%%%%%%%%%%%%%%%%%%%%%%%
\subsection{Proof of Theorem \ref{th:surface} via the doubled surface}
%%%%%%%%%%%%%%%%%%%%%%%%%%%%%%%%%%%%%%%%%%%%%%%%%%%%%%%%%%%%

Denote by $S'$ the closed Riemannian surface obtained by doubling $S$, that is
\[
S'=S\sqcup S/\sim
\]
where $\sim$ identifies the boundary of the two copies of $S$ via the identity map.
Denote by $\tilde{S}$ the universal Riemannian cover of $S'$. 

Theorem \ref{th:surface} will be a direct consequence of the following result.

\begin{proposition}\label{prop:ent}
For any $n\geq 1$, the volume entropy of the metric graph $\Gamma_n$ is at most equal to the volume entropy of $S'$, that is:
\[
h(\Gamma_n)\leq h(S').
\]
\end{proposition}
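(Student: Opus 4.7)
The plan is to transfer exponential growth from the metric graph side to the surface side by constructing an equivariant isometric embedding of the universal covering tree $T_n$ of $\Gamma_n$ into $\tilde{S}$, and then bounding the area growth in $\tilde{S}$ from below by the length growth in $T_n$ through a tubular neighborhood argument.

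First, I would observe that $\Gamma_n$ admits a natural length-preserving map $\iota:\Gamma_n\to S'$: each boundary subarc $e_i$ of length $L_i$ maps onto the corresponding piece of $\gamma\subset S'$, and each extra edge $f_k$ maps onto the orthogeodesic $c_k\subset S\subset S'$. Because $c_k$ meets $\gamma$ orthogonally at both its endpoints, the doubled arc $\gamma_k:=c_k\cup\bar c_k$ is in fact a smooth closed geodesic of $S'$. Thus $\iota(\Gamma_n)$ lies in the graph $G:=\gamma\cup\gamma_1\cup\cdots\cup\gamma_n$ of pairwise orthogonally intersecting closed geodesics of $S'$. Fixing a base vertex $v_0\in\Gamma_n$ and a lift $\tilde v_0\in\tilde S$ of $\iota(v_0)$, I would lift $\iota$ edge by edge to a $\pi_1(\Gamma_n)$-equivariant map $\tilde\iota:T_n\to\tilde S$ that is an isometry on each edge.

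The heart of the proof is a ping-pong argument showing that $\tilde\iota$ is injective, hence an isometric embedding for the intrinsic path metric. At each image vertex in $\tilde S$, two full lifted closed geodesics (one covering $\gamma$ and one covering some $\gamma_k$) meet orthogonally, cutting a small neighborhood into four well-defined sectors. A reduced edge path in $T_n$ enters exactly one such sector each time it arrives at a vertex, and the resulting sequence of sectors records the combinatorial type of the tree path faithfully. This prevents two distinct reduced paths of $T_n$ based at $\tilde v_0$ from sharing an endpoint in $\tilde S$. Equivalently, the inclusion $G\hookrightarrow S'$ is $\pi_1$-injective, so the preimage $\pi^{-1}(G)\subset\tilde S$ is a forest in which $T_n$ embeds as a subtree. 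Granted this injectivity, compactness of $S'$ supplies a uniform $\eps_0>0$ such that the $\eps_0$-neighborhood in $\tilde S$ of any edge of $\tilde\iota(T_n)$ is embedded, with area at least $2\eps_0$ times its length; summing over the edges of $B_{T_n}(\tilde v_0,R)$ and absorbing the bounded vertex overlaps one gets
\[
\text{Area}_{\tilde S}\bigl(B(\tilde v_0,R+\eps_0)\bigr)\ \geq\ C\cdot \ell\bigl(B_{T_n}(\tilde v_0,R)\bigr)
\]
for a positive constant $C$. Taking logarithms, dividing by $R$, and letting $R\to\infty$ delivers $h(S')\geq h(\Gamma_n)$.

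The main obstacle is clearly the ping-pong step: one has to establish injectivity of $\tilde\iota$ without any curvature hypothesis on $S'$, so that none of the standard $\mathrm{CAT}(0)$ tools are available. The combinatorial picture—orthogonal crossings yielding four sectors that a tree path must traverse in a uniquely determined way—is suggestive, but turning it into ping-pong sets that are genuinely invariant under the alternation between boundary-edges and orthogeodesic-edges, and that witness the separation of distinct branches of $T_n$ globally in $\tilde S$, is the principal technical difficulty of this section.
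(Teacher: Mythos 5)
There is a genuine gap, and it is located exactly where you flag the ``principal technical difficulty'': the injectivity of $\tilde\iota$ is in fact \emph{false} for the map you construct. You send every orthogeodesic edge $f_k$ to the arc $c_k$ lying in a single copy $S_1=S\subset S'$, so the induced map $\pi_1(\Gamma_n,x)\to\pi_1(S',x')$ factors through $\pi_1(S_1,x')$ and is a genuine homomorphism induced by the inclusion of the embedded graph $\gamma\cup c_1\cup\cdots\cup c_n$ into $S$. That inclusion is not $\pi_1$-injective in general: writing $\chi(S)=\chi(\Gamma_n)+\sum_F\chi(F)$ over the complementary faces $F$ and using $\chi(\Gamma_n)=-n$, one gets $\sum_F\chi(F)=n+\chi(S)>0$ as soon as $n>-\chi(S)$, so some complementary region is a disk; its boundary circuit is a nontrivial reduced cycle of $\Gamma_n$ (alternating $\gamma$-arcs and orthogeodesics) which dies in $\pi_1(S)$. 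Concretely, for a one-holed torus and two dual simple orthogeodesics the complement is a single octagonal disk, and the corresponding relation already kills injectivity for $n=2$. Consequently $\tilde\iota:T_n\to\tilde S$ identifies distinct vertices of the tree, and no amount of local ``four-sector'' bookkeeping at the orthogonal crossings can repair a statement that is globally false. (Your subsequent tubular-neighborhood estimate is also only a sketch --- one must control the multiplicity with which the neighborhoods of distinct tree edges overlap --- but that part is routine; the injectivity is the real obstruction.)

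The paper's proof is built precisely to evade this: the map $\varphi_n:\pi_1(\Gamma_n,x)\to\pi_1(S',x')$ sends the successive orthogeodesic letters of a reduced word $\omega_0f_{i_1}\omega_1\cdots f_{i_m}\omega_m$ \emph{alternately} into the two copies $S_1$ and $S_2$ of $S$ inside the double. This destroys the homomorphism property (so one cannot argue via an equivariant map of universal covers at all), but it makes a genuine ping-pong possible: in $\tilde S$ each successive lifted orthogeodesic arc crosses a lift of $\gamma'$ into a strictly smaller half-space, so distinct words land on distinct lifts of $x'$ and $\varphi_n$ is injective as a map of sets. Since it is also length-non-increasing, the comparison $h(\Gamma_n)\le h(S')$ then follows from the characterization of volume entropy as the exponential growth rate of the number of based homotopy classes of length at most $R$ (Sabourau's lemma), rather than from an area-growth estimate. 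If you want to salvage your write-up, you should replace the inclusion $\iota$ by this alternating construction and switch from area growth to homotopy-class counting.
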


Indeed, from Equation (\ref{eq:Basm}) and Proposition \ref{prop:ent}, we derive that for any $n\geq 1$
\[
\ell(\partial S)>\frac{2}{h(S')}\argsh \left(2 \, \sum_{k=1}^n\frac{1}{1+e^{h(S')\ell_k}}\right),
\]
which implies Theorem \ref{th:surface} by letting $n\to +\infty$ as ${\mathcal O}(S)=\{\ell_k \mid k\geq 1\}$. 

\begin{proof}
First recall that the volume entropy of a finite simplicial complex $X$ endowed with a piecewise smooth Riemannian metric (such as a metric graph, or a closed Riemannian surface) satisfies the formula
\[
h(X)=\lim_{R\to\infty}\frac{1}{R}\log\#\{\alpha\in\pi_1(X,x)\mid \ell(\alpha)\leq R\},
\]
for any point $x\in X$, see \cite[Lemma 2.3]{Sab06}. Here we have denoted by $\ell(\alpha)$ the length of a homotopy class $\alpha \in \pi_1(X,x)$ defined as the shortest length of a loop based at $x$ belonging to the class $\alpha$.

Fix some point $x \in \gamma\simeq {\mathbb S}^1$ distinct from the $v_i$'s. Note first that any homotopy class of the fundamental group of $\Gamma_n$ based at $x$ is uniquely represented as a length minimizing path of the form $\omega_{0}f_{i_1}\omega_{1}\cdots f_{i_m}\omega_{m}$ for some $m\geq0$, where the letters $f_{i_j}$ stand for the edges associated to the chosen orthogeodesics $c_1,\ldots,c_n$, and the words $\omega_j$'s are locally length minimizing paths of the subgraph ${\mathbb S}^1\subset \Gamma_n$. If $m=0$ the path is reduced to a closed minimal loop $\omega_0$ of ${\mathbb S}^1\subset \Gamma_n$.

Now we define a ping-pong map as follows. Denote by $S_1,S_2\subseteq S'$ the two natural copies of $S$ contained in $S'$, and  by $\gamma'=S_1\cap S_2\subseteq S'$ their intersection. By construction of $\Gamma_n$, there is a natural identification between the closed geodesic $\gamma'\subseteq S'$ and the subgraph ${\mathbb S}^1\subseteq \Gamma_n$. Denote by $x'$ the point of $\gamma'$ corresponding to the point $x$ of $\gamma\simeq {\mathbb S}^1$. To a given homotopy class $\alpha$ of $\Gamma_n$ based at $x$ and represented by the sequence $\omega_0 f_{i_1}\omega_1\cdots f_{i_m}\omega_m$, we associate the homotopy class $h(\alpha) \in \pi_1(S',x')$ based at the point $x'$ corresponding to the path formed by following first the subarc denoted by $\omega_0'$ of $\gamma'$ corresponding to $\omega_0$, then through the copy $f_{i_1}'$ of the orthogeodesic associated to $f_{i_1}$ and laying in $S_1$, then through the segment $\omega_1'$ associated to $\omega_1$ in $\gamma'$, following through the orthogeodesic $f_{i_2}'$ associated to $f_{i_2}$ in $S_2$, and keep alternating $S_1$ and $S_2$ until completing the word $\omega_0 f_{i_1}\omega_1\cdots f_{i_m}\omega_m$ and closing up in a loop $\omega_0' f_{i_1}'\omega_1'\cdots f_{i_m}'\omega_m'$ of $S'$. The ping-pong map \[\varphi_n:\pi_1(\Gamma_n,x)\to \pi_1(S',x')\] thus defined is not a morphism of groups, but satisfies the following property.
  
\begin{lemma}\label{lem:ping}
The map $\varphi_n:\pi_1(\Gamma_n,x)\to \pi_1(S',x')$ is injective. 
\end{lemma}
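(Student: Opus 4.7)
The plan is to pass to the universal cover $\tilde S$ of $S'$ and exploit the Bass-Serre tree $T$ associated with the amalgamated decomposition $\pi_1(S',x') \cong \pi_1(S_1,x') \ast_{\pi_1(\gamma',x')} \pi_1(S_2,x')$ coming from the doubling. Vertices of $T$ come in two types corresponding to lifts of $S_1$ and $S_2$ in $\tilde S$, while edges of $T$ correspond to lifts of $\gamma'$. I would fix a lift $\tilde x'$ of $x'$ lying on a base edge $\tilde e_0$ of $T$; since $\pi_1(S',x')$ acts freely on the fiber over $x'$, two elements of $\pi_1(S',x')$ coincide if and only if the endpoints in $\tilde S$ of their lifts starting at $\tilde x'$ agree.

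Given $\alpha = \omega_0 f_{i_1} \omega_1 \cdots f_{i_m} \omega_m$ in reduced form, the ping-pong loop $\varphi_n(\alpha)$ lifts to a path $\tilde p_\alpha$ starting at $\tilde x'$. By construction, $\tilde p_\alpha$ consists of sub-segments $\tilde\omega_j'$ on successive lifts $\tilde e_j$ of $\gamma'$ together with lifts $\tilde{f}'_{i_j}$ of the chosen orthogeodesics, the latter lying in alternating vertex-pieces $v_1, v_2, \ldots, v_m$ of $\tilde S$ corresponding to lifts of $S_1$, $S_2$, $S_1$, and so on. I would first show that the sequence $\tilde e_0, \tilde e_1, \ldots, \tilde e_m$ is a geodesic in $T$: since each $c_k$ represents a non-trivial relative homotopy class in $S$, its lifts in $\tilde S$ connect two distinct lifts of $\gamma'$, giving $\tilde e_j \neq \tilde e_{j-1}$; and the alternation of $S_1, S_2$ makes $v_j$ and $v_{j+1}$ of opposite types, so $\tilde e_{j+1} = \tilde e_{j-1}$ would force this common edge to have endpoints $\{v_j, v_{j+1}\}$ and hence to coincide with $\tilde e_j$, contradicting the previous point.

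Assuming $\varphi_n(\alpha) = \varphi_n(\beta)$, both lifts share the same terminal point $\tilde y$, and uniqueness of geodesics in the tree $T$ forces the edge sequences to coincide: $m = m'$ and $\tilde e_j^\alpha = \tilde e_j^\beta$ for every $j$. I would then recover the combinatorial data inductively along this common sequence. The exit point of $\tilde{f}'_{i_j}$ on $\tilde e_{j-1}$ must be a lift of one of the pairwise distinct vertex points $w_{k,\cdot}$ on $\gamma$; this lift simultaneously identifies the orthogeodesic index $i_j$ and the winding of $\omega_{j-1}$, and $\tilde{f}'_{i_j}$ is then the uniquely determined lift of $c_{i_j}$ starting at this exit point, whose terminal point on $\tilde e_j$ constrains the next step. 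Iterating for $j = 1, \ldots, m$ and matching $\omega_m$ using the common endpoint $\tilde y$ yields $\alpha = \beta$.

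The main obstacle is the inductive reconstruction in the last paragraph: one must carefully verify that distinct choices of orthogeodesic lifts within the same vertex-piece of $\tilde S$ cannot propagate to the same exit points on the next tree edge and ultimately to the same terminal point $\tilde y$. This hinges on the pairwise disjointness of the vertex points $\{w_{k,1}, w_{k,2}\}$ on $\gamma$ together with the tree constraint fixing which edge of $T$ each successive lifted orthogeodesic must reach.
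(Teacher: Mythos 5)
Your strategy is essentially the paper's: lift the ping-pong loop to the universal cover $\tilde S$, observe that the successive lifts of $\gamma'$ visited by the lifted path form a non-backtracking sequence (the paper phrases this via half-spaces and the boundary components of the lift $\tilde S_1$ rather than via the Bass--Serre tree, but it is the same bookkeeping), and then recover the word $\omega_0 f_{i_1}\cdots f_{i_m}\omega_m$ from the terminal point of the lift. The tree formalization is a clean and arguably tidier way to organize the argument, and your no-backtracking step (lifts of essential arcs join distinct lifts of $\gamma'$, and the $S_1/S_2$ alternation rules out $\tilde e_{j+1}=\tilde e_{j-1}$) is sound.

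The step you defer as ``the main obstacle'' is, however, exactly the crux, and the mechanism you propose for it is not quite the right one. What must be shown is that inside a single vertex piece (a fixed lift $\tilde S_1$ of $S_1$, say) there is \emph{at most one} lift of \emph{at most one} orthogeodesic joining a given ordered pair of boundary lines $(\tilde e_{j-1},\tilde e_j)$; only then does equality of the edge sequences force equality of the exit points, after which the pairwise disjointness of the $w_{k,i}$ lets you read off $i_j$ and $\omega_{j-1}$. The disjointness of the $w_{k,i}$ cannot by itself prevent two different lifted orthogeodesics, leaving $\tilde e_{j-1}$ at two different points, from both landing on $\tilde e_j$. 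The paper closes this by a topological trick: since the statement depends only on the topology, replace the metric by a hyperbolic one; two distinct lifted orthogeodesics joining the same pair of boundary lines would then bound a hyperbolic quadrilateral with four right angles, which is impossible. Alternatively, one can argue purely topologically: $\tilde S_1$ is simply connected, so any two arcs joining the lines $\tilde e_{j-1}$ and $\tilde e_j$ are homotopic through arcs with endpoints on these lines; since the $c_k$ represent pairwise distinct nontrivial relative homotopy classes, the index is forced, and two distinct lifts of the same $c_k$ joining the same pair of lines would differ by a deck transformation stabilizing two distinct boundary lines, hence trivial. With this uniqueness statement supplied, your induction goes through and the proof is complete.
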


\begin{proof}[Proof of Lemma \ref{lem:ping}]
Lift the geodesic $\gamma'$  to an infinite geodesic $\tilde{\gamma}$ in the universal cover $\tilde{S}$ of $S'$, and the point $x'$ to a point $\tilde{x}$ on $\tilde{\gamma}$. If $p:\tilde{S} \to S'$ denotes the universal covering map, we define $\tilde{S}_1$ as the connected component of $p^{-1}(S_1)\subset \tilde{S}$ whose boundary contains $\tilde{\gamma}$. The boundary of $\tilde{S}_1$ consists of a numerable set of infinite geodesics. There exists an injective correspondence between sequences of the form $\omega_0 f_{i_1}$ and boundary infinite geodesics of $\partial \tilde{S}_1$ described as follows. 

\begin{figure}[h]
    \centering
    \begin{overpic}[width=.6\linewidth,keepaspectratio]{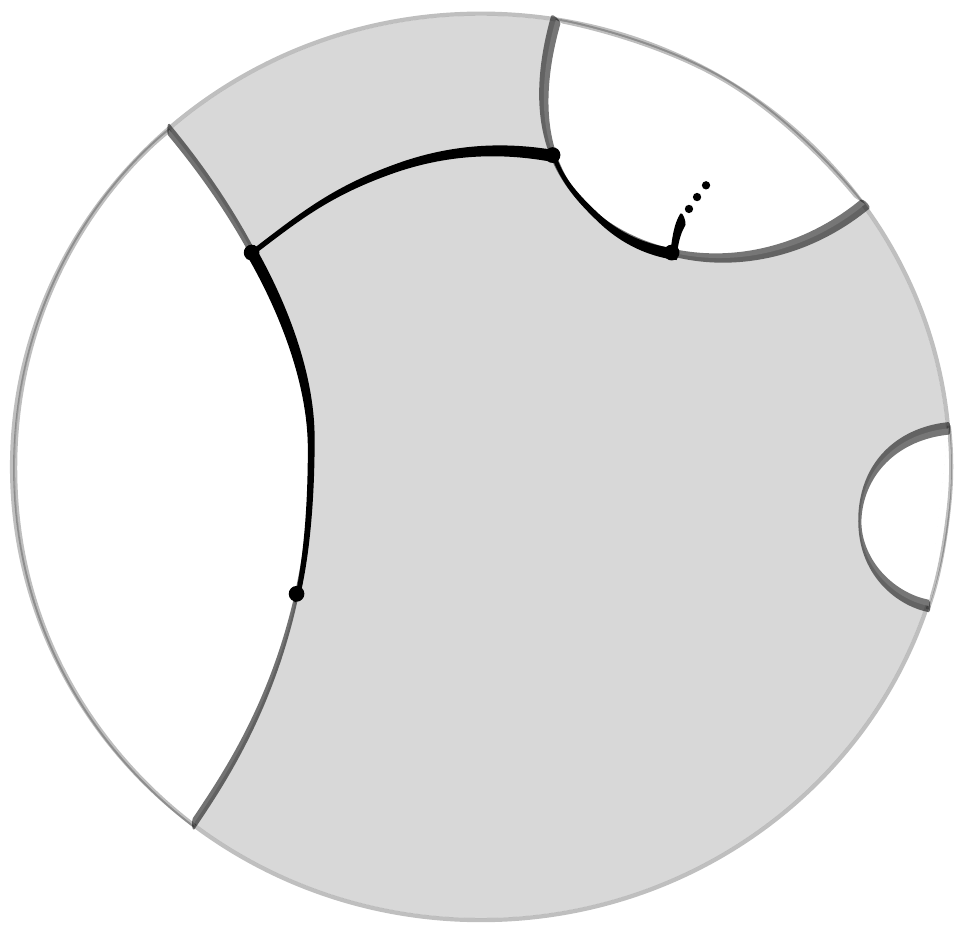}
    \put(4,12){\scalebox{2}{$\tilde{S}$} }
    \put(22,20){$\tilde{\gamma}$}
    \put(32,33){$\tilde{x}$}
    \put(26,49){$\tilde\omega_0$}
    \put(40,74){$\tilde f_{i_1}$}
    \put(57,70){$\tilde\omega_1$}
    \put(66,78){$\tilde f_{i_2}$}
    \put(50,45){\scalebox{1.2}{$\tilde{S_1}$}}
\end{overpic}
    \label{fig:fig3}
\end{figure}
  
First, we lift to $\tilde{S}$ the path $\omega_0'$ contained in $\gamma'$ and corresponding to $\omega_0$ starting from the point $\tilde{x}$ into a geodesic subarc of $\tilde{\gamma}$ denoted by $\tilde{\omega}_0$. Next, we lift the orthogeodesic arc $f_{i_1}'$ of $S_1\subset S'$ corresponding to $f_{i_1}$ starting from the final point of the arc $\tilde{\omega}_0$ into a geodesic arc of $\tilde{S}_1$ denoted by $\tilde{f}_{i_1}$. By construction, the final point of $\tilde{f}_{i_1}$ belongs to a boundary component of $\tilde{S}_{1}$ which defines our correspondence. One can see that the final points of two sequences of the form $\omega_0 f_{i_1}$ belong to the same boundary infinite geodesic if and only if sequences are equal: since this statement only depends on the topology of the surface, replace the metric in $S'$ by a hyperbolic metric, and if two different sequences of the form $\omega_0f_{i_1}$ translated $\tilde{x}$ to the same boundary component, we would have constructed a hyperbolic rectangle which is impossible.

With a fixed $\omega_0f_{i_1}$, repeat the argument for the next sequence $\omega_1f_{i_2}$, which will give an injective correspondence between all possible sequences of this kind and the countably many boundaries of the lift of $S_2\subseteq S'$ bounded by the lift of $\gamma'$ which contains the endpoint of $\tilde{f}_{i_1}$. Iteratively, one finds that the set of all possible sequences on the graph of the form $\omega_{0}f_{i_1}\omega_{1}\cdots f_{i_m}$ injects to the set of lifts of $\gamma'$ lying in the half-space of the universal cover $\tilde{S}$ bounded by $\tilde{\gamma}$ and starting with $\tilde{S_1}$ (see figure above). Moreover, the last letters $\omega_m$ give you all possible lifts of $x'$ in the particular lift of $\gamma'$ corresponding to the sequence $\omega_{0}f_{i_1}\omega_{1}\cdots f_{i_m}$. 

So the image by $\varphi_n$ of two different homotopy classes of $\pi_1(\Gamma_n,x)$ send $x'$ to two different endpoints, and hence $\varphi_n$ is injective.
\end{proof}
Now by construction, we have that $\ell(\varphi_n(\alpha))\leq \ell(\alpha)$ for all $\alpha \in \pi_1(\Gamma_n,x)$. Therefore we find that
\[
\#\{\alpha \in \pi_1(\Gamma_n,x) \mid \ell(\alpha) \leqslant R\} \leqslant \#\{\beta \in \pi_1(S',x') \mid \ell(\beta)\leqslant R\}
\]
from which we derive the desired inequality $h(\Gamma_n)\leq h(S')$.
\end{proof}

\subsection{On the double inequality for compact surfaces}

Note that, even though we obtained a double inequality for the metric graph case, for a general Riemannian surface, an upper bound for the boundary in terms of the orthospectrum and volume entropy cannot hold, as one can deform the metric such that the boundary length increases with a controlled orthospectrum and entropy. A further question is whether with additional assumptions on the surface (as requiring the boundary geodesic to be length minimizing in its free homotopy class) this could be possible. With the techniques used in this article, this question translates to the following.

Via the same construction of a sequence of metric graphs $\Gamma_n$ in \ref{subsec:constrgamma}, limiting the construction we get an infinite graph $\Gamma$ encoding the entire set of relative homotopy classes to the boundary. This graph will have an infinite set of trivalent vertices. Referring to \cite[Lemma 2.3]{CP20}, the entropy of this graph is finite and well-defined, and in fact, by continuity $h(\Gamma)=\lim_{n\to\infty}h(\Gamma_n)$. This follows, for example, from the expression of the volume entropy of a graph as critical exponent of the fundamental group. Then, the existence of an upper bound in Theorem \ref{th:surface} would be implied by controlling the volume entropy of the doubled surface in terms of the volume entropy of the graph.\\

\noindent {\bf Acknowledgements.} 
We would like to thank the anonymous referee for useful comments that helped improving the present paper.

\end{document}